\def\F{\mathfrak{F}} 
\def\x{\mathbf{x}}
\def\X{\mathbf{X}}
\def\R{\mathbb R} 
\def\N{\mathbb N} 
\newtheorem{lem}{Lemma}
\begin{document}

\title{Leibniz Differential and Non-Stantard Calculus.}
\author{E.V. Shchepin\\ Mathematical Institute, 119991, Gubkina 8,
Moscow \\E-mail: scepin\@ mi.ras.ru. .}
\maketitle

\begin{abstract}
The designation of the integral of a function by a measure usually contains a differential symbol,
which is not given an independent meaning.
In this article, we will show how to naturally define the concept of a measure differential in both standard and non-standard analysis.
In the standard analysis, the differential-based scheme for determining the integral is a generalization of the Perron integral.
In the non-standard analysis it represents a variation of Loebe's construction of the Lebesgue integral.
\end{abstract}
\bigskip
\bigskip/

Keywords: differential, Leibniz, nonstandard analysis. integral, Perron, Loebe, monad
MSC: 26A39, 26A42

\paragraph{Introduction.}
Method of analysis of the infinitesimals lies in the fact that the comparison between
for different objects (for example,  geometric shapes or processes that occur in time), the latter are divided into infinitesimal parts. The corresponding infinitesimal parts are compared
between each other and if the results of the comparisons are the same for all parts,
they are integrated into a similar conclusion for entire objects. The efficiency of the analysis of infinitesimals is due to the fact that when comparing infinitesimals, we can neglect infinitesimals of higher orders.

Let us be given something whole (usually a continuum) $X$ divided into an infinite number of infinitesimal parts called \emph{monads} and some measure $\mu$ defined for parts of this whole. The values of this measure on the  monad $x$ are denoted by $d\mu(x)$ and are called  the measure differentials.
If the measure is additive, then for any finite partitions, the sum of the measures of parts is equal to the measure of the whole. According to Leibniz's "law of continuity" \cite{Kaz}, this extends to infinite partitions and leads to the following basic postulate of the Leibniz integral theory:
\begin{equation}\label{postulate main}
  \int_{X} d\mu(x)=\mu(X)
\end{equation}
 Where $\int_{X}$ denotes Leibniz  "summa omnia"  of all the differentials.

The principle of comparison based on division goes back to Aristotle himself.
For integrals, it is formulated as follows

\begin{equation}\label{postulate Arist}
 \text { if   $\Phi(x)\ge \Psi(x)$   for all monades, then } \int_{X}\Phi(x)\ge \int_{\X}\Psi(x).
\end{equation}

The effectiveness of this theory is due to the following \emph{rule of neglect},
which directly follows from the postulates of Leibniz and Aristotle.
\begin{equation}\label{}
 \text {if  $\Phi(x)$ is infinitesimal for all $x$ then } \int_{X}\Phi(x)d\mu(x)=0
\end{equation}
the last rule could be called the \emph{Berkeley principle}, in honor of Berkeley  famous for his criticism
 of analysis just about the rules of neglect.

This article proposes two different constructions of the Leibniz integral theory, that is, the integral theory based on the concept of an infinitesimal differential and satisfying the above postulates. We will call the approach that treats infinitesimal Cauchy sequences as zero-tending sequences "standard". For functions of one variable, it was implemented by the author in the article \cite{Scepin}.
 The other, "non-standard" treating infinitesimals according to Robinson, is essentially implemented by Loebe \cite{Loeb}.

The reader is not required to be familiar with non-standard analysis.
All we need to expose the non-standard analysis approach to the differential is the concept of an ultra-product.

\paragraph{Monotone partitions and monads.}
Below we will deal with a compact topological space $X$ fixed once and for all.
A partition of a topological space is a cover of this space by closed sets with disjoint interiors.
Also invariably we will consider the sequence of finite partitions  $\{\omega_n\}$ of the space $X$.
However, we will not consider the properties of this sequence unchanged.
The union $\cup\omega_n$ will be denoted $\Omega$. The elements of $\Omega$ will be called \emph{fractions}.

The sequence  $\{\omega_n\}$ is called \emph{monotone} if for any $n$ the partition $\omega_{n+1}$ refines $\omega_{n}$,
i.e. for each fraction $U\in\omega_{n+1}$ there is a unique fraction $\pi(U)\in\omega_n$, such that $U\subset \pi(U)$

 This defines the mapping $\pi\colon\omega_{n+1}\to \omega_n$  called the \emph{inclusion projection}.
Any monotone partition sequence $\{\omega_n\}$ with inclusion projection forms an inverse system.
The inverse limit of the system is denoted simply $\lim \Omega$. The elements of $\lim\omega_n$ will be
called \emph{monotone monads} of the sequence $\{\omega_n\}$.
So for a sequence $\{U_n\in\omega_n\}$ being monotone monad is provided by inclusions $U_{n+1}\subset U_n$ for all $n\in\N$.
A monotone monad $\{U_n\}$ is called \emph{infinitesimal} if the intersection of its members $\cap U_n$ is just one point of $X$.
a monotone partition is called an \emph{infinitesimal} partition if all its monads are such.

\paragraph{Ultra-partitions and ultra-monads.}
Let us fix once and for all a free ultrafilter $\F$ on the set $\N$ of natural numbers.
Recall that a family of subsets $\F\subset 2^\N$  of the  natural series $\N$ is called a free ultrafilter if it satisfies
the following conditions
\begin{enumerate}
  \item $\emptyset\notin\F$, $\N\in\F$
  \item $A,B\in\F$ implies $A\cap B\in\F$
  \item $A\cup B\in\F$ implies $A\in\F$ or $B\in\F$
  \item $\cap \{A\in\F\}=\emptyset$
\end{enumerate}

Two sequences  $\{O_n\}$ and $\{O_n'\}$  of any objects
are called $\F$\emph{-equivalent} if
\begin{equation}\label{hyper-equiv}
  \{n\in\N\mid O_n=O'_n\} \in \F
\end{equation}
In general, a class of $\F$-equivalent sequences is called an \emph{ultra-sequence}.
For a sequence of fraction $\{U_n\}$ the corresponding ultra-sequence  is called \emph{ultra-monad}.
The set of all ultra-monads of the sequence of partitions $\{\omega_n\}$ is called \emph{ ultra-partition} and  denoted by  $*\lim\Omega$.

In other words an ultra-partition is an ultra-product of a sequence of partitions.
A sequencе $\{U_n\}$ is called \emph{ultra-infinitesimal} if it converges to a point with respect to the ultrafilter.

An ultra-partitions  is called \emph{ultra-infinitesimal} if  all its ultra-monads  are ultra-infinitesimals.

\paragraph{Monadic distributions.}
Below we develop simultaneously two theories: "standard" and "nonstandard".
For the sake of brevity we apply the following convention.
\begin{itemize}
\item Definitions and statements with prefix "ultra" in brackets are interpreted in two ways: the first one ignores the contents of the brackets, and the second one opens the brackets.
 \item  The use of ambiguous terms and symbols in a sentence that have a standard and non-standard meaning (for example, "monad" can be a "monotone monad" or "ultra-monad") means that the sentence has two versions: standard and non-standard
 \end{itemize}

Let's now denote monads in bold lowercase Latin letters, and denote the entire set of monads
in bold $\mathbf{X}$ regardless of the monads in question

  Function $\Phi$  is called  \emph{monadic distribution}
on  infinitesimal  partition $\X$   if it match to
every monad $\mathbf{x}\in\mathbf{X}$ of the  partition
a number (ultra-)sequence.

The main example of a monadic distribution is the \emph{measure differential}, whose value on the monad is determined by the formula
\begin{equation}\label{measure-diff}
  d\mu(\{U_n\})=\{\mu(U_n)\},
\end{equation}
where $\mu$ any (possibly non-additive) numeric function on $\Omega$.

\paragraph{The principle of comparison.}

Infinitesimals we deal in integrating, are represented by numerical sequences.
Comparison between two numeric sequences, as well as other operations between sequences is performed
termwise, but unlike addition or multiplication, it may not give any definite result.
If the inequality $x_n\ge y_n$ is satisfied for all members of sequences without exception, then we say that the first sequence \emph{totally} majorates the second and write the usual sign of inequality. If the inequality  holds for all but finitely many terms of the sequences then we say (following \cite{Scepin}), that the first sequence \emph{ eventually} majorates the second.
If the inequality holds only at some element of the ultra-filter $\F$, then we say, that the first sequence \emph{ ultra-}majorates the second.
The eventual majorating  is indicated by the sign $\succ$. and ultra-majorating by the  sign $*\succ$.

The following Lemma, for ultra-majorating, is a particular case of the famous "Transfer Principle"\ ,
and for eventual majorating is the analog of the corresponding Lemma on comparing of differentials
of functions  from\cite{Scepin}.
\begin{lem}
\label{CP}   Let $\mu_1$ and $\mu_2$ be two finitely additive measures on the  $\Omega$,
  such that  $\mu_1\{U_n\}*\succ\mu_2\{U_n\}$ for all  $\{U_n\}\in \prod_{n=1}^{\infty}\omega_n$.  Then $\mu_1 (X)\ge\mu_2(X)$.
\end{lem}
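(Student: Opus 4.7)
My plan is to argue by contradiction: suppose $\mu_2(X) > \mu_1(X)$ and set $\delta := \mu_2 - \mu_1$, a finitely additive real-valued function on $\Omega$ with $\delta(X) > 0$. I will exhibit a single sequence $\{U_n\}\in\prod_n\omega_n$ along which the strict inequality $\mu_1(U_n) < \mu_2(U_n)$ holds at \emph{every} index. Such a sequence rules out ultra-majorating (since $\{n\mid\mu_1(U_n)\ge\mu_2(U_n)\}=\varnothing\notin\F$) and a fortiori rules out eventual majorating, so it contradicts the hypothesis under either reading of the lemma.

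For the ultra-majorating version printed in the statement, the construction is immediate: finite additivity gives
\[
\sum_{U\in\omega_n}\delta(U) \;=\; \delta(X) \;>\; 0
\]
for every $n$, so I can pick any $U_n\in\omega_n$ with $\delta(U_n)>0$. The resulting $\{U_n\}$ is the required counter-sequence, and the ultrafilter axioms do the rest.

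For the parallel statement with eventual majorating $\succ$, the sequence $\{U_n\}$ must moreover be a monotone monad, i.e.\ an element of $\lim\Omega$. I would build it recursively along the refinement projection $\omega_{n+1}\to\omega_n$: set $U_0 := X$, and given $U_{n-1}$ with $\delta(U_{n-1})>0$, observe that the fractions of $\omega_n$ contained in $U_{n-1}$ form a finite partition of $U_{n-1}$, so additivity again gives $\sum\delta(\cdot)=\delta(U_{n-1})>0$ and I can select a child $U_n\subset U_{n-1}$ in $\omega_n$ with $\delta(U_n)>0$.

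The step that really needs care is this monotone recursion — specifically, the appeal to refinement to localize the sum of $\delta$ inside $U_{n-1}$. Beyond that I do not foresee any real obstacle: the hypothesis quantifies over \emph{all} sequences (resp.\ all monotone monads), leaving no room to conceal the excess mass $\delta(X)>0$ of $\mu_2$ over $\mu_1$.
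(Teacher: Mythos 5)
Your argument for the ultra-majorating case is exactly the paper's proof: assume $\mu_1(X)<\mu_2(X)$, use finite additivity of both measures over each $\omega_n$ to pick, for every $n$, a fraction $U_n\in\omega_n$ with $\mu_1(U_n)<\mu_2(U_n)$, and note that the resulting sequence violates the hypothesis because $\{n\mid \mu_1(U_n)\ge\mu_2(U_n)\}=\emptyset\notin\F$. Your additional recursive construction of a monotone monad with $\delta(U_n)>0$ at every level correctly handles the eventual-majorating variant, which the paper does not prove but delegates to \cite{Scepin}; the only hypothesis it quietly uses is that additivity holds within each fraction under refinement, which is the natural reading of ``finitely additive measure on $\Omega$''.
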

\begin{proof}
  Suppose in contrary $\mu_1 (X)<\mu_2(X)$.
Then for any $n$ one has inequality
  \begin{equation}\label{sum-mu}
      \mu_1 X=\sum_{U\in\omega_n} \mu_1(U)< \sum_{U\in\omega_n} =\mu_2(U)
  \end{equation}
As two above sums contains the same number of summands, there is at least one of
them, says $U_n$ such that
\begin{equation}\label{summand-mu}
 \mu_1(U_n)< \mu_2(U_n)
\end{equation}
In this case one gets the total inequality
$\mu_1 \{U_n\}< \mu_2 \{U_n\}$ in contradiction with our hypothesis for opposite inequality for all ultra-monad.
\end{proof}
The standard (monotone) variant of the comparison principle is proved
just as in \cite{Scepin}.

\paragraph{Leibniz integral.}

Now we are ready to introduce the key concept of the Leibniz integral for monadic distributions.
The integral of the differential of a finite additive measure is determined by the Leibniz formula \eqref{postulate main}. In this case, the
Lemma  \ref{CP} ensures that the Aristotelian comparison principle is fulfilled for integrals of this kind.
The integration of other distributions are made on the basis of their comparison with differentials of measures. Namely
the equality
\begin{equation}\label{int-def}
  \int_{\mathbf{X}} \Phi(\x)=I
\end{equation}
means that for any positive number $\epsilon$, there will be finitely additive measures $\mu_1$,$\mu_2$
  on $\Omega$ for which inequalities are met:
 \begin{enumerate}
   \item $d\mu_2(\x) (*)\succ \Phi(\x)(*)\succ d\mu_1(\x)$ for all $\x\in\omega_\X$
   \item $\mu_2(X)\le I+\epsilon$
   \item $\mu_1(X)\ge I-\epsilon$
 \end{enumerate}
The principle of comparison, for this definition, is obviously fulfilled. So we got two theories for the Leibniz integral at once: standard and non-standard.

The linearity of the Leibniz integral is easily deduced from its definition in the usual way, as is done for example in the article \cite{Scepin}.

As a nontrivial example of the distribution, let's consider the famous Dirac Delta function.

\begin{equation}\label{delta-func}
  \delta_{\mathbf{x}_0}(\mathbf{x})=\left\{ \begin{array}{ll}
                                                \frac{1}{d\mu(\mathbf{x}_0)}, & \hbox{$\mathbf{x}=\mathbf{x}_0$;} \\
                                                0, & \hbox{$\mathbf{x}\ne\mathbf{x}_0$.}
                                              \end{array}
                                            \right.
.
\end{equation}
Integrable monadic distributions generate Schwartz distributions, that is, generalized functions. But unlike generalized functions, they can also be multiplied.
However, when multiplying, you may lose integrability.

\paragraph{Leibniz integral and Lebesgue-Stieltjes integral.}
The most common example of a monadic distribution is the product of the differential of a measure $\mu$ by the value of a function $f(x)d\mu(\x)$. There $\x\in\X$ denotes a (ultra-)monad, and
$x=\lim\x$ denotes its (ultra-)limit point.

The indicator $\mathbf{1}_Y(\mathbf{x})$ of a subset $Y\subset X$ is defined as a Boolean function
on the set of monads $\mathbf{X}$ as follows:
\begin{equation}\label{chi-def}
  \mathbf{1}_Y(\mathbf{x})=\left\{
    \begin{array}{ll}
     1 , & \hbox{$x\in Y$;} \\
     0 , & \hbox{$x\notin Y$.}
    \end{array}
  \right.
\end{equation}
Where $x=\lim\mathbf{x}$.

The following lemma establishes a connection between the  Leibniz integral and
the Lebesgue measure.
\begin{lem}
  \label{Reznichenko} Let $\mu$ be a regular Radon measure  $X$.
  Then for any measurable subset $Y\subset \X$ one has:
  \begin{equation}\label{Rez}
    \int_{\mathbf{X}}\mathbf{1}_Y(\mathbf{x})d\mu(\mathbf{x})=\mu(Y),
  \end{equation}
\end{lem}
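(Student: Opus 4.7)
The plan is to use the regularity of $\mu$ to sandwich $Y$ between a compact inner approximation and an open outer approximation, and to produce the two comparison measures required by the definition of the Leibniz integral simply by restricting $\mu$ to these approximating sets.

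Fix $\epsilon>0$. By regularity, pick a compact $K\subset Y$ and an open $U\supset Y$ with $\mu(U)-\mu(K)<\epsilon$. Define two set functions on $\Omega$ by
$$\mu_1(V):=\mu(V\cap K),\qquad \mu_2(V):=\mu(V\cap U).$$
These are finitely additive on $\Omega$ (modulo the boundary issue discussed below), and immediately satisfy
$$\mu_1(X)=\mu(K)\ge \mu(Y)-\epsilon,\qquad \mu_2(X)=\mu(U)\le \mu(Y)+\epsilon,$$
which are the last two clauses of the definition of the Leibniz integral with $I=\mu(Y)$.

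The key step is verifying the comparison $d\mu_2(\x)\,(*)\!\succ\,\mathbf{1}_Y(\x)\,d\mu(\x)\,(*)\!\succ\,d\mu_1(\x)$ monad by monad. Let $\x=\{U_n\}$ be a (ultra-)monad with limit point $x$. Since $\x$ is infinitesimal, $U_n$ shrinks to $x$; consequently, for any open neighbourhood $W$ of $x$ one has $U_n\subset W$ eventually in the standard case (by a compactness argument applied to the decreasing compacts $U_n\setminus W$) and on a member of $\F$ in the non-standard case. Two cases:
\begin{itemize}
\item If $x\in Y$, then $x\in U$, hence $U_n\subset U$ eventually/ultra, so $\mu_2(U_n)=\mu(U_n)=\mathbf{1}_Y(\x)\mu(U_n)$, while $\mu_1(U_n)=\mu(U_n\cap K)\le \mu(U_n)$ automatically.
\item If $x\notin Y$, then $x\notin K$ and $X\setminus K$ is open, so $U_n\cap K=\emptyset$ eventually/ultra, giving $\mu_1(U_n)=0=\mathbf{1}_Y(\x)\mu(U_n)$, while $\mu_2(U_n)\ge 0$ always.
\end{itemize}
In both cases the required inequalities hold in the eventual sense (hence also in the ultra sense), and both the standard and non-standard versions are handled simultaneously by the same argument.

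The one real technical obstacle I expect is that $\mu_1$ and $\mu_2$ must be genuinely finitely additive on $\Omega$, whereas fractions in $\omega_n$ are closed and may share boundary pieces rather than being disjoint. Since $\mu$ is Radon, this is a non-issue whenever the sequence $\{\omega_n\}$ is chosen so that the boundaries $\partial V$ of all fractions $V\in\Omega$ have $\mu$-measure zero; for a regular Radon measure such partitions always exist, and one may assume $\{\omega_n\}$ has been taken of this form. Once boundary additivity is secured, the $\sigma$-additivity of $\mu$ restricted to $K$ and to $U$ gives finite additivity of $\mu_1$ and $\mu_2$ on $\Omega$, and the proof is complete by Lemma \ref{CP} applied to the squeeze above.
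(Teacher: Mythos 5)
Your proof is correct and follows essentially the same route as the paper's: sandwich $Y$ by regularity, restrict $\mu$ to the approximating sets, and use openness to show the restricted differential agrees with $\mathbf{1}_Y\,d\mu$ on each infinitesimal monad (the paper proves only the upper bound with an open $U\supset Y$ and appeals to the complement for the lower bound, which amounts to your compact $K\subset Y$). Your additional remark that the fractions' boundaries must be $\mu$-null for $\mu_1,\mu_2$ to be finitely additive on $\Omega$ is a legitimate point of care that the paper passes over in silence.
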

\begin{proof}
  Due to the regularity of the measure, for any positive $\epsilon$, there is an open set $U_\epsilon$ containing $Y$ that
  $\mu(U_\epsilon)\le \mu Y+\epsilon$.
  Consider the measure $ \mu_U$ defined for any $\mu$ - measurable $T$ by equality
  \begin{equation}\label{mu-restr}
    \mu_U(T)=\mu(U\cap T)
    \end{equation}
Consider any monad $\{M_n\}$ of a given partition. If its limit $M$ does not belong to $Y$, then $\mathbf{1}_Y(M)=0$ and so the distribution $\mathbf{1}_Y d\mu$ is null on this monad.
 If the limit belongs to $Y$, then it also belongs to $U$, and it follows from the openness of the latter that all elements of the monad are from some  the ultrafilter elements are contained in $U$, so $d\mu_U$ coinides with $d\mu$ on this monad.
    In both cases considered, we have the inequality $d\mu_U * \succ \mathbf{1}_Yd\mu$. Therefore,
   $d\mu_U$ majorates $ \mathbf{1}_Yd\mu$ on all monads, from where $ \int \mathbf{1}_Yd\mu\le \mu(U)\le \mu(Y)+\epsilon$.
  Due to the arbitrariness of $\epsilon$ , this implies the  inequality $ \int\mathbf{1}_Yd\mu\le\mu(Y)$.
  The opposite inequality follows from similar arguments about the complement to $Y$.
\end{proof}
Since linear combinations of indicators uniformly approximate any bounded function, the proved Lemma implies the integrability of $f(x)d\mu(x)$ type distributions for any Lebesgue-Stieltjes integrable function $f(x)$,  and the coincidence of the nonstandard (as well as standart) Leibniz integral with the corresponding Lebesgue-Stieltjes integral.

\paragraph{Newton-Leibniz formula.}
Let $X=[a,b]$ be a segment of real line. Let $\X$ be an (ultra or monotone) infinitesimal partition of $X$
by closed intervals.  For a real function $f(x)$ on $[a,b]$ and any monad $\x=\{[a_n,b_n]\}$
of the partition  is defined
the differential $df(\x)$ by the rule:
\begin{equation}\label{dif-func}
  df(\x)=\{f(b_n)-f(a_n)\}
\end{equation}

The difference between standard and nonstandard Leibniz integrals manifests itself when integrating the derivative.
The difference between the form $f'(x)$ and the ratio $df(x)/dx$  is infinitely small for any nested  sequence of segments
(see \cite{Scepin}), hence for the standard Leibniz integral  one has the equality:
\begin{equation}\label{Newton-Leibniz}
  \int_{\X}f'(x)dx=\int_{\X} df(x)=b-a
\end{equation}
But this is not the case for an arbitrary  converging to $x$ sequence of intervals, as in the case of ultra-monads.
Therefore, the validity of the Newton-Leibniz formula for a non-standard integral can be easily proved only for continuous functions. And in general, for a non-Lebesgue-integrable derivative, the form $f'(x)dx$ is not integrable by the non-standard Leibniz.

\paragraph{Infinitesimal forms}
In the "standard" version of the integral theory proposed above, we operated with sequences, whereas in the "non-standard" version with classes of their ultra-equivalence. In fact, even in the standard version, it is more accurate to operate with classes of \emph{eventually equivalent sequences}. The eventual equivalence of sequences means that they coincide everywhere except for a finite number of members. We will call the class of eventually equivalent sequences an \emph{eventual sequence}.

А monotone eventual sequence of sets $\{U_n\}$ converging to a point is called \emph{eventually infinitesimal set}.
And an ultra-sequences of sets $\{U_n\}$ is called \emph{ultra-infinitesimal set}
if it ultra-converges to a point.
A function that matches to  infinitesimal sets a  corresponding class of numerical sequences is
called the \emph{infinitesimal measure}.
Every measure $\mu$  generates  an  infinitesimal measure $d\mu$ by the usual rule
\eqref{measure-diff}.

An \emph{infinitesimal form} is any function defined on infinitesimal sets
and taking values in corresponding (eventual or ultra) equivalence classes of
numerical sequences.
The main examples of infinitesimal forms are measure differentials and
superposition of function $f\colon X\to\R$ with  passage to the limit $\mathbf{x}\to x$.
Infinitesimal forms can be added, multiplied, divided, taken by absolute value, and so on.

Infinitesimal form is called \emph{integrable} if it is integrable
over each infinitesimal partition, and all its integral coincide.

 The most common is an infinitesimal  form of the type $f(x)d\mu(\x)$.
Loebe in \cite{Loeb}  uses a different interpretation of $fd\mu$.
Assume that a point $p(U)$ is selected for each set
 $U\in\Omega$.
 In this case, the value of the function on the monad can be determined by the following rule
\begin{equation}\label{func-measure1}
  f(\{U_n\})=\{f(p(U_n))\}
\end{equation}
 It gives infinitesimally close results to the usual rule $f(\{U_n\})=f(*\lim U_n)$
 in the case of a special (so called minimal) partition considered by him.
 Вut in our case the infinitesimal closeness is provided only  for continuous functions.

The concept of a measure differential introduced above should be called a \emph{definite differential}. Whereas an \emph{indefinite differential} can be called a function of an infinitesimal set (i.e. infinitesimal form) defined by the same relation. An indefinite differential generates a definite, on a given infinitesimal partition of space. The relation between a definite and an indefinite differential is similar to the relation between a definite and an indefinite integral.

\end{document}